\renewcommand{\Re}{\operatorname{Re}}
\DeclarePairedDelimiterX\innerp[2]{\langle}{\rangle}{#1, #2}
\DeclarePairedDelimiterX\ccint[2]{\lbrack}{\rbrack}{#1, #2}
\newtheorem{theorem}{Theorem}
\newtheorem{lemma}{Lemma}
\newtheorem{corollary}{Corollary}
\author{Petar Melentijevi\'c}
\thanks{The author is partially supported by MPNTR grant 174017, Serbia}
\title[Hypercontractive inequalities for weighted Bergman spaces]{Hypercontractive inequalities for weighted Bergman spaces}
\subjclass[2010]{Primary 30H20, Secondary 30H05}
\keywords{Sharp inequalities, Bergman spaces, Subharmonicity, Unit disk}
\begin{document}
\begin{abstract} We obtain sharp $L^p\rightarrow L^q$ hypercontractive inequalities for the  weighted Bergman spaces on the unit disk $\mathbb{D}$ with the usual weights \\ $\frac{\alpha-1}{\pi}(1-|z|^2)^{\alpha-2},\alpha>1$ for $q\geq 2,$ thus solving an interesting case of a problem from \cite{JANSON}. We also give some estimates for $0<q<2.$ 
  
 \end{abstract}
\maketitle

\section{Introduction}

In his paper on hypercontractive inequalities for multipliers on orthogonal polynomials, (\cite{JANSON}), Janson posed a question on the hypercontractivity of the operator $P_r f(z):=f(rz)$ on spaces of analytic functions. Weissler (\cite{WEISSLER}) earlier proved the result for Hardy space, while Janson proved it for Fock spaces (using the estimates from \cite{NELSON}) and Bergman space with very specific weight. We will further investigate these estimates for the weighted Bergman spaces:
\begin{equation}
\bigg(\int_{\mathbb{D}}|f(rz)|^q dA_{\alpha}(z)\bigg)^{\frac{1}{q}}\leq \bigg(\int_{\mathbb{D}}|f(z)|^p dA_{\alpha}(z)\bigg)^{\frac{1}{p}},
\end{equation}
where $dA_{\alpha}(z)=\frac{\alpha-1}{\pi}(1-|z|^2)^{\alpha-2} dA(z)$ and $0<p<q<+\infty$ and $\alpha>1.$ Here $\mathbb{D}$ stands for the unit disk.
It can be easily seen, as in the setting of Hardy spaces, that $r$ cannot be greater that $\sqrt{\frac{p}{q}}.$ This value of $r$ comes from considering the family $f_{\epsilon}(z)=1+\epsilon z,$ for small $\epsilon'$s.

Weissler and Janson, in fact, proved (1) for $\alpha=1$ (Hardy space) and $\alpha=\frac{3}{2}.$ In \cite{BBHOCP}, using Beckner's estimates for Poisson semigroup on the unit sphere in $\mathbb{R}^n$ (\cite{BECKNER}), Bayart, Brevig, Haimi, Ortega-Cerda and Perfekt proved these inequalities for $\alpha=\frac{n+2}{2},$ with $n \in \mathbb{N}$ and any $0<p<q<\infty.$ In this note, we will prove those estimates for $q\geq 2$ and all $\alpha>1$ and give some partial results for $q<2.$ More precisely, we prove the following theorem:
\begin{theorem}
	Let $0<p<q<+\infty$ and $\alpha>1$. For $f \in A^p_{\alpha}$, i.e. an analytic function such that the right hand side of the inequality (1) is finite, we have:
	\begin{itemize}
	\item For $q\geq 2$ - the estimate (1) holds with $r=\sqrt{\frac{p}{q}}$
	\item For $q<2$ - (1) holds with $r^2=\max\{\frac{p}{2},\frac{p\alpha-p}{q\alpha-p}\}$ for an arbitrary holomorphic function, while for the zero-free function this estimate holds with $r=\sqrt{\frac{p}{q}}$
	\end{itemize}
\end{theorem}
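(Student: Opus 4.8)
The plan is to treat the sharp range $q\ge 2$ by a semigroup (heat‑flow) monotonicity argument, to deduce the zero‑free case $q<2$ from it, and to handle general $f$ with $q<2$ by a cruder subharmonic‑averaging estimate. First some routine reductions: by approximation (replace $f$ by $f(\rho\,\cdot)$, $\rho\uparrow1$) we may assume $f$ holomorphic near $\overline{\mathbb D}$, and we normalize $\|f\|_{A^p_\alpha}=1$. Two elementary remarks will be used repeatedly: since $dA_\alpha$ is a probability measure, $t\mapsto\|g\|_{A^t_\alpha}$ is nondecreasing, so $A^s_\alpha\subset A^t_\alpha$ contractively for $s\ge t$; and $P_rP_s=P_{rs}$, so estimates of type (1) compose. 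In particular, for $0<p<2\le q$ it suffices to prove the two endpoint bounds $\|P_{\sqrt{p/2}}f\|_{A^2_\alpha}\le\|f\|_{A^p_\alpha}$ and $\|P_{\sqrt{2/q}}f\|_{A^q_\alpha}\le\|f\|_{A^2_\alpha}$.

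For $q\ge 2$ I would differentiate the exponent along the flow. Put $Q(r)=p/r^2$ and $\Phi(r)=\big(\int_{\mathbb D}|f(rz)|^{Q(r)}\,dA_\alpha(z)\big)^{1/Q(r)}$. Using $\partial_r\log|f(rz)|=\tfrac1r\Re\tfrac{rz f'(rz)}{f(rz)}$ and the polar identity $\int_{\mathbb D}|g|^{Q}\Re\tfrac{zg'}{g}\,dA_\alpha=\tfrac1Q\int_{\mathbb D}|z|\,\partial_{|z|}(|g|^{Q})\,dA_\alpha$, a short computation shows that for each $r$, $\Phi'(r)\ge 0$ is equivalent to
\[
\int_{\mathbb D}|z|\,\partial_{|z|}\big(|g|^{Q}\big)\,dA_\alpha \;\ge\; 2\Big(\int_{\mathbb D}|g|^{Q}\log|g|^{Q}\,dA_\alpha-\|g\|_{A^Q_\alpha}^{Q}\log\|g\|_{A^Q_\alpha}^{Q}\Big),
\]
with $g=P_rf$, $Q=Q(r)$ --- a logarithmic‑Sobolev–type inequality, restricted to holomorphic $g$, for the measure $dA_\alpha$ and the Euler field $|z|\,\partial_{|z|}$. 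For $Q\ge 2$ one has $\Delta(|g|^{Q})=Q^2|g|^{Q-2}|g'|^2\ge 0$ with no singularity, and I would attack the displayed inequality by integrating its left side by parts in $|z|$ against the weight $(1-|z|^2)^{\alpha-2}$ and then invoking subharmonicity of $|g|^Q$. Granting it, $\Phi$ is nondecreasing on any $r$‑interval on which $Q(r)\ge2$; since $r\in[\sqrt{p/q},1]$ gives $Q(r)\in[p,q]$, this proves (1) with $r=\sqrt{p/q}$ whenever $p\ge 2$, and in particular gives $\|P_{\sqrt{p/2}}f\|_{A^2_\alpha}\le\|f\|_{A^p_\alpha}$ (the exponent hits $2$ exactly at $r=\sqrt{p/2}$), which together with the source‑exponent‑$2$ case of the same argument and the composition above settles $0<p<2\le q$.

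For $q<2$ with $f$ zero‑free, set $F=f^{q/p}$ (a well‑defined zero‑free holomorphic function); then (1) for $f$ with $r=\sqrt{p/q}$ is exactly $\|P_rF\|_{A^p_\alpha}\le\|F\|_{A^{p^2/q}_\alpha}$, the same problem for the pair $(p^2/q,p)$ of the same ratio $p/q$. Iterating this equivalence slides the pair of exponents along the geometric progression $\{p(q/p)^n:n\in\mathbb Z\}$; choosing $n$ so large that the smaller exponent first exceeds $2$ lands in the already‑settled regime $2\le P<Q$, so the zero‑free statement holds for all $0<p<q$. For general holomorphic $f$ with $q<2$, optimality is genuinely lost, and I would use the sub‑mean‑value bound coming from subharmonicity of $|f|^p$: pushing $dA_\gamma$ forward by the automorphism $\varphi_\zeta(w)=\frac{w+\zeta}{1+\bar\zeta w}$ gives, for every $\gamma>1$,
\[
|f(\zeta)|^p\le (1-|\zeta|^2)^{\gamma}\int_{\mathbb D}\frac{|f(w)|^p}{|1-\bar\zeta w|^{2\gamma}}\,dA_\gamma(w).
\]
Raising this to the convex power $q/p\ge 1$, applying Jensen (or Minkowski's integral inequality) in that power, and integrating at $\zeta=rz$ against $dA_\alpha(z)$ reduces (1) to a pointwise bound on an explicit integral that, after expanding the kernel $|1-r\bar z w|^{-2\gamma q/p}$ in monomials, becomes a one‑variable inequality; optimizing the free parameter $\gamma$ yields $r^2=\tfrac{p(\alpha-1)}{q\alpha-p}$, while $r^2=p/2$ is simply $\|P_{\sqrt{p/2}}f\|_{A^2_\alpha}\le\|f\|_{A^p_\alpha}$ followed by the inclusion $A^2_\alpha\subset A^q_\alpha$.

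The main obstacle is the logarithmic‑Sobolev–type inequality for $Q\ge 2$: the sharpness of $r=\sqrt{p/q}$ hinges on the constant there being exactly $2$, so one cannot afford any $\alpha$‑dependent loss, and it is precisely here that holomorphy of $g$ (not just subharmonicity of $|g|^Q$) and the exact weight exponent $\alpha-2$ must be used in tandem --- I expect this to require a careful integration by parts followed by a Cauchy–Schwarz estimate linking the Dirichlet term $\int_{\mathbb D}|g|^{Q-2}|zg'|^2\,dA_\alpha$ to the entropy on the right. A secondary difficulty is verifying that the parameter optimization in the $q<2$ argument really produces the stated radius rather than something smaller.
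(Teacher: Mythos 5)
Your plan for the main case $q\ge 2$ reduces the theorem to a logarithmic-Sobolev--type inequality for the measure $dA_\alpha$ and the Euler field, and then says ``Granting it, \dots''. That granted inequality is not a lemma you can defer: by Gross's theorem (cited in the paper as the source of Corollary 1) the differentiated form of the hypercontractive estimate along the flow $r\mapsto \|P_rf\|_{A^{p/r^2}_\alpha}$ is \emph{equivalent} to the hypercontractive estimate itself. So the proposal restates the theorem in an equivalent form and leaves the whole content unproved; the suggested attack (``integrate by parts and invoke subharmonicity of $|g|^Q$'') is exactly where the difficulty lives and no argument is given. The paper avoids this entirely by a different decomposition: Kulikov's theorem (inequality (2)) absorbs the change of exponent $p\to q$ at the cost of changing the weight $\alpha\to\beta=q\alpha/p$, and what remains is a fixed-exponent comparison $\int_{\mathbb D}|f(rz)|^q\,dA_\alpha\le\int_{\mathbb D}|f(z)|^q\,dA_\beta$, proved by two integrations by parts from the convexity of $y\mapsto\int_0^{2\pi}|f(\sqrt{y}e^{i\theta})|^q\,d\theta$ (Pavlovi\'c's lemma, which needs only $q\ge2$). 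That is why the paper gets all $0<p<q$ with $q\ge 2$ in one stroke, with no composition of endpoint estimates.

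There is a second, independent gap in your composition step for $0<p<2\le q$. Your flow monotonicity is claimed only on intervals where $Q(r)=p/r^2\ge 2$, which covers the factor $\|P_{\sqrt{2/q}}f\|_{A^q_\alpha}\le\|f\|_{A^2_\alpha}$ but \emph{not} the factor $\|P_{\sqrt{p/2}}f\|_{A^2_\alpha}\le\|f\|_{A^p_\alpha}$ with $p<2$: on $[\sqrt{p/2},1]$ the exponent satisfies $Q(r)\le 2$, so this bound does not follow ``in particular'' from the $p\ge2$ case as you assert. Since the $r^2=p/2$ branch of the $q<2$ statement and the zero-free iteration both lean on this same $(p,2)$ endpoint, the gap propagates. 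Finally, for general $f$ with $q<2$ your sub-mean-value/kernel-expansion route is only a sketch; you do not carry out the expansion or the optimization in $\gamma$, so the claim that it yields $r^2=\frac{p(\alpha-1)}{q\alpha-p}$ is unverified. (The paper gets this radius cheaply: after the Kulikov reduction, one integration by parts plus the monotonicity of the circular means --- which holds for every holomorphic $f$ and every $q>0$ --- gives the inequality with $\alpha-1,\beta-1$ in place of $\alpha,\beta$, hence $r^2=\frac{\alpha-1}{\beta-1}=\frac{p\alpha-p}{q\alpha-p}$.) Your reduction of the zero-free $q<2$ case to the large-exponent case via fractional powers $f^{(p/q)^n}$ is sound and essentially the paper's $f^{1/n}$ trick, but it rests on the unproved $q\ge2$ case.
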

By Janson's paper and very well-known argument on logarithmic Sobolev inequalities from \cite{GROSS}, we have the following:
\begin{corollary}
	There holds the logarithmic Sobolev inequality 
	$$  \int_{\mathbb{D}}|f(z)|^p\log|f(z)|dA_{\alpha}(z)\leq \frac{1}{2}\Re\int_{\mathbb{D}}|f(z)|^{p-2}\overline{f(z)}zf'(z) dA_{\alpha}(z)$$
	$$+   \frac{1}{p}\int_{\mathbb{D}}|f(z)|^pdA_{\alpha}(z)\log\bigg(\int_{\mathbb{D}}|f(z)|^pdA_{\alpha}(z)\bigg),$$
	for $p\geq 2$ and holomorphic function $f,$ where $dA_{\alpha}(z)=\frac{\alpha-1}{\pi}(1-|z|^2)^{\alpha-2}dA(z).$
\end{corollary}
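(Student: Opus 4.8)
The Corollary is the infinitesimal form of the hypercontractive estimate (1), and the plan is to deduce it from the first bullet of the Theorem by Gross's differentiation argument (\cite{GROSS}). First I would put $r=e^{-t}$ and, for a fixed exponent $p\ge 2$, set $q=q(t):=pe^{2t}$, so that $r=\sqrt{p/q}$ and $q(t)\ge p\ge 2$ for every $t\ge 0$; this is precisely why the Corollary requires $p\ge 2$, since it is exactly the range in which the first part of the Theorem is available. Applying that part with these exponents gives, for all $t\ge 0$,
\begin{equation*}
\Phi(t):=\paren*{\int_{\mathbb{D}}\abs{f(e^{-t}z)}^{q(t)}\,dA_{\alpha}(z)}^{1/q(t)}\le \paren*{\int_{\mathbb{D}}\abs{f(z)}^{p}\,dA_{\alpha}(z)}^{1/p}=\Phi(0).
\end{equation*}
Thus $\Phi$ attains its maximum over $[0,\infty)$ at the endpoint $t=0$, so that — once differentiability of $\Phi$ at $0$ is justified — we must have $\Phi'(0)\le 0$ (reading this as a right-hand derivative), and the claim is that this inequality, after unwinding, is exactly the asserted logarithmic Sobolev inequality.

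The second step is to compute $\Phi'(0)$. Writing $\Phi(t)=M(t)^{1/q(t)}$ with $M(t)=\int_{\mathbb{D}}\abs{f(e^{-t}z)}^{q(t)}\,dA_{\alpha}(z)$, logarithmic differentiation gives $\Phi'(t)/\Phi(t)=-q'(t)q(t)^{-2}\log M(t)+q(t)^{-1}M'(t)/M(t)$. For $M'(0)$ I differentiate under the integral sign: with $g(t,z)=f(e^{-t}z)$, the identity $\partial_{t}\abs{g}^{2}=2\Re\paren*{\conj{g}\,\partial_{t}g}$ together with $\partial_{t}g(t,z)=-e^{-t}zf'(e^{-t}z)$ yields $\partial_{t}\abs{g}=-\Re\!\paren*{e^{-t}zf'(e^{-t}z)\conj{f(e^{-t}z)}}\big/\abs{g}$, and hence, writing $\abs{g}^{q(t)}=\exp\paren*{q(t)\log\abs{g}}$ and evaluating at $t=0$ (where $q(0)=p$, $q'(0)=2p$),
\begin{equation*}
M'(0)=\int_{\mathbb{D}}\paren*{2p\,\abs{f}^{p}\log\abs{f}-p\,\abs{f}^{p-2}\Re\paren*{zf'\conj{f}}}\,dA_{\alpha}(z).
\end{equation*}
Plugging $q(0)=p$, $q'(0)=2p$, $M(0)=\int_{\mathbb{D}}\abs{f}^{p}\,dA_{\alpha}$ into the formula for $\Phi'/\Phi$, imposing $\Phi'(0)\le 0$, and rearranging (multiplying through by $\tfrac12\int_{\mathbb{D}}\abs{f}^{p}\,dA_{\alpha}>0$ and dividing by $p$) produces exactly
\begin{equation*}
\int_{\mathbb{D}}\abs{f}^{p}\log\abs{f}\,dA_{\alpha}\le \tfrac12\Re\int_{\mathbb{D}}\abs{f}^{p-2}\conj{f}\,zf'\,dA_{\alpha}+\tfrac1p\paren*{\int_{\mathbb{D}}\abs{f}^{p}\,dA_{\alpha}}\log\paren*{\int_{\mathbb{D}}\abs{f}^{p}\,dA_{\alpha}},
\end{equation*}
where $\abs{f}^{p-2}$, being real, has been pulled inside the real part.

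The genuinely delicate point is the analytic justification: differentiating $M$ at $t=0$ and ensuring finiteness of all the integrals. Near a zero of $f$ the integrand $\abs{f}^{p}\log\abs{f}$ is bounded (as $x\mapsto x^{p}\log x$ extends continuously by $0$ at $x=0$), and since $p\ge 2$ the factor $\abs{f}^{p-2}$ is harmless, so the only real issues are uniform domination in $t$ of the difference quotients of $\abs{g(t,z)}^{q(t)}$ and the behaviour near $\partial\mathbb{D}$. I would handle this in the standard way: prove the inequality first for $f$ holomorphic in a neighbourhood of $\overline{\mathbb{D}}$ (polynomials, or the dilates $f_{\rho}(z)=f(\rho z)$ with $\rho<1$), where $\abs{f}$ and $\abs{f'}$ are bounded and the difference quotients are dominated by a fixed $L^{1}(dA_{\alpha})$ function for $t$ near $0$, so that the computation above is rigorous; then pass to general $f\in A^{p}_{\alpha}$ by applying the inequality to $f_{\rho}$ and letting $\rho\to 1^{-}$, using Fatou's lemma on the left (legitimate because $\abs{f_{\rho}}^{p}\log\abs{f_{\rho}}\ge -1/(ep)$ and $dA_{\alpha}$ is a probability measure on $\mathbb{D}$), together with $\|f_{\rho}\|_{A^{p}_{\alpha}}\to\|f\|_{A^{p}_{\alpha}}$ and convergence of the energy term $\Re\int_{\mathbb{D}}\abs{f_{\rho}}^{p-2}\conj{f_{\rho}}\,zf_{\rho}'\,dA_{\alpha}$ on the right. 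This last convergence, and the finiteness of the right-hand side, is precisely the regularity implicitly demanded of $f$ in the statement and is where the hypothesis $f\in A^{p}_{\alpha}$ is used.
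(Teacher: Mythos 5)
Your proposal is correct and is precisely the argument the paper invokes: the paper gives no details, merely citing the ``very well-known'' Gross differentiation argument applied to Theorem 1, and your computation (setting $q(t)=pe^{2t}$, $r=e^{-t}$, and imposing $\Phi'(0)\le 0$) carries out exactly that argument, with the sign bookkeeping and the final rearrangement checking out. Your additional care about differentiating under the integral and passing from dilates $f_\rho$ to general $f$ goes beyond what the paper records and is sound.
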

Moreover, by \cite{GROSS}, this inequality is equivalent to (1).

\section{The method of a proof and the main result}

The main tools in our approach are recent inequalities due to Kulikov (\cite{KULIKOV}) and the lemma on convexity of certain integral means of analytic functions (\cite{PAVLOVIC}). Let us say that, in \cite{KULIKOV}, Kulikov has proved a conjecture of Pavlovi\'c from \cite{PAVLOVIC1} and \cite{PAVLOVIC} and Bayart, Brevig, Haimi, Ortega-Cerda and Perfekt from \cite{BBHOCP}, where several results that support this and some other conjectures are provided. Moreover, he proved a more general conjecture of Lieb and Solovej from \cite{LIEBSOLOVEJ}. Some interesting partial results are given in \cite{DANGUOWANG}, \cite{KULIKOV1} and \cite{LLINEARES}. See also \cite{KALAJ} and \cite{FRANK} for some generalizations and extensions of these results. Now, we will formulate Kulikov's theorem from \cite{KULIKOV}(in the appropriate form) and then the lemma from \cite{PAVLOVIC}, including its proof for the sake of completeness. 

\begin{theorem}
	For $0<p<q<+\infty$ and $f \in A^p_{\alpha}(\mathbb{D})$ there holds the inequality
	\begin{equation}
	\bigg(\int_{\mathbb{D}}|f(z)|^q dA_{\beta}(z)\bigg)^{\frac{1}{q}}\leq  \bigg(\int_{\mathbb{D}}|f(z)|^p dA_{\alpha}(z)\bigg)^{\frac{1}{p}},
	\end{equation} 
	where $dA_{\alpha}$ is defined as before and $\frac{\alpha}{p}=\frac{\beta}{q}.$ The inequality is sharp and the extremizers for the fixed $p,q, \alpha, \beta$ are the functions given by $f(z)=C(1-az)^{\frac{-2\alpha}{p}}$ with $C \in \mathbb{C}, a\in \mathbb{D}.$
\end{theorem}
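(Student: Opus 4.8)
The plan is to prove this sharp embedding variationally: produce an extremizer by a compactness argument, write down its Euler--Lagrange equation, and then identify every solution of that equation with a weighted reproducing kernel. It is the last step that carries the real difficulty — it is exactly the content of Kulikov's theorem — so I describe the first two steps only briefly.

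\emph{Reductions and symmetry.} By homogeneity normalize $\int_{\mathbb{D}}\abs{f}^p\,dA_{\alpha}=1$, so that the claim reads $\int_{\mathbb{D}}\abs{f}^q\,dA_{\beta}\le 1$. Passing to the dilates $f_{\rho}(z)=f(\rho z)$, whose $A^p_{\alpha}$- and $A^q_{\beta}$-norms increase monotonically to those of $f$ as $\rho\uparrow 1$ (monotonicity of the circular means, then monotone convergence), we may assume $f$ holomorphic past $\overline{\mathbb{D}}$. The decisive structural fact is that, because $\tfrac{\alpha}{p}=\tfrac{\beta}{q}$, the weighted composition operator
\[
T_a f(z)=\paren{1-\abs{a}^2}^{\alpha/p}(1-\conj{a}z)^{-2\alpha/p}\,f\!\paren{\tfrac{a-z}{1-\conj{a}z}},\qquad a\in\mathbb{D},
\]
is \emph{simultaneously} an isometry of $A^p_{\alpha}$ and of $A^q_{\beta}$; consequently the quotient $\|f\|_{A^q_{\beta}}/\|f\|_{A^p_{\alpha}}$ is invariant under $f\mapsto T_af$, under rotations, and under scalar multiples. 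Since $\{T_a 1:a\in\mathbb{D}\}$ is precisely the family $C(1-az)^{-2\alpha/p}$, this accounts for the shape of the extremizers and lets us normalize $f$ freely inside its symmetry orbit.

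\emph{Existence of an extremizer.} The pointwise estimate $\abs{f(z)}^p\lesssim(1-\abs{z}^2)^{-\alpha}\|f\|_{A^p_{\alpha}}^p$ together with $\tfrac{\alpha}{p}=\tfrac{\beta}{q}$ yields the non-sharp continuity of $A^p_{\alpha}\hookrightarrow A^q_{\beta}$ and, more precisely, the pointwise domination $\abs{f}^q\,dA_{\beta}\le C\,\abs{f}^p\,dA_{\alpha}$ when $\|f\|_{A^p_{\alpha}}=1$; hence $C_*:=\sup\{\|f\|_{A^q_{\beta}}:\|f\|_{A^p_{\alpha}}=1\}$ is finite, and $C_*\ge\|1\|_{A^q_{\beta}}=1$, the theorem being the statement $C_*=1$. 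Take a maximizing sequence $(f_n)$ and re-centre it by the operators $T_{a_n}$ — say, moving the conformal barycentre of $\abs{f_n}^p\,dA_{\alpha}$ to the origin, which is legitimate precisely because the functional is $T_a$-invariant — so that the probability measures $\abs{f_n}^p\,dA_{\alpha}$ stay tight in $\mathbb{D}$. By Montel's theorem a subsequence converges locally uniformly to a holomorphic $f_*$, and tightness, transferred to $\abs{f_n}^q\,dA_{\beta}$ by the domination above, prevents loss of mass, so $\|f_*\|_{A^p_{\alpha}}=1$ and $\|f_*\|_{A^q_{\beta}}=C_*$: an extremizer exists.

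\emph{Euler--Lagrange and identification.} Differentiating $t\mapsto\|f_*+th\|_{A^q_{\beta}}^q-\lambda\|f_*+th\|_{A^p_{\alpha}}^p$ at $t=0$ in the directions $h$ and $ih$, for $h\in H^{\infty}(\mathbb{D})$, optimality becomes: there is $c>0$ with
\[
\int_{\mathbb{D}}\abs{f_*}^{q-2}\conj{f_*}\,h\,dA_{\beta}=c\int_{\mathbb{D}}\abs{f_*}^{p-2}\conj{f_*}\,h\,dA_{\alpha}\qquad\text{for all }h\in H^{\infty}(\mathbb{D}),
\]
that is, the anti-holomorphic Bergman projections (with the two weights) of $\abs{f_*}^{q-2}\conj{f_*}$ and $\abs{f_*}^{p-2}\conj{f_*}$ are proportional. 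That $f_*(z)=C(1-\conj{a}z)^{-2\alpha/p}$ satisfies this is immediate: on each side the integrand splits as (a holomorphic factor)$\,\times\,(1-a\conj{z})^{-\gamma}$ with $\gamma=\beta$ on the left and $\gamma=\alpha$ on the right, and the reproducing identity $\int_{\mathbb{D}}g(z)(1-a\conj{z})^{-\gamma}\,dA_{\gamma}(z)=g(a)$ collapses both integrals to multiples of $h(a)$ whose ratio is independent of $h$. The hard part — and what I expect to be the genuine obstacle — is the converse: that \emph{every} solution of this Euler--Lagrange equation is, up to the symmetries above, such a kernel power. A soft attempt would bootstrap the regularity of $f_*$ from the equation and then use its algebraic form to force $\abs{f_*}^{q-p}(1-\abs{z}^2)^{\beta-\alpha}$ to be constant after a Möbius change of variable, whence $f_*$ zero-free and a kernel power; but carrying this through is exactly Kulikov's theorem, and in practice one substitutes his rearrangement argument, showing directly that for $\|f\|_{A^p_{\alpha}}=1$ the distribution of $\abs{f}$ with respect to $dA_{\beta}$ is majorized by that of the extremal kernel, so that $\int_{\mathbb{D}}\Phi(\abs{f})\,dA_{\beta}\le\int_{\mathbb{D}}\Phi(\abs{k})\,dA_{\beta}$ for every convex increasing $\Phi$ — in particular for $\Phi(t)=t^q$. (When $f$ is zero-free, the substitution $g=f^{p/2}$, under which $\tfrac{\beta}{Q}=\tfrac{\alpha}{2}$ with $Q=2q/p$, reduces the claim to a single space $A^2_{\alpha}$; the general case then needs the zeros of $f$ handled separately.)
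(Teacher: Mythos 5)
The paper does not actually prove this statement: it is Kulikov's theorem, imported verbatim from \cite{KULIKOV}, so there is no internal proof to compare yours against. Judged on its own terms, your proposal has a genuine gap, and you name it yourself. Everything up to the Euler--Lagrange equation is standard machinery --- the simultaneous-isometry computation for $T_a$ under $\alpha/p=\beta/q$ is correct, as is the pointwise domination $|f|^q\,dA_\beta\le C\,|f|^p\,dA_\alpha$ on the unit sphere of $A^p_\alpha$ --- but it only delivers the existence of a maximizer and a stationarity condition that the kernel powers happen to satisfy. That is not the theorem: to conclude $C_*=1$ you must show that \emph{every} maximizer is a kernel power (or compute the supremum by some other route), and the step where you would do this is replaced by the sentence ``carrying this through is exactly Kulikov's theorem'' and a one-line allusion to ``his rearrangement argument.'' The distributional majorization you invoke --- that the distribution of $|f|$ with respect to $dA_\beta$ is dominated by that of the extremal kernel whenever $\|f\|_{A^p_\alpha}=1$ --- is precisely the hard content of \cite{KULIKOV}, proved there by a monotonicity/isoperimetric argument on the superlevel sets of $|f(z)|^p(1-|z|^2)^\alpha$, not by a variational analysis; nothing in your text establishes it.

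Two smaller points. First, even the existence step is only sketched: after recentering by $T_{a_n}$ you still need to rule out vanishing and dichotomy for the measures $|f_n|^p\,dA_\alpha$, and the claim that fixing the conformal barycentre produces tightness requires proof. Second, the Euler--Lagrange route has a structural weakness here: the stationarity condition is a proportionality of two differently weighted antiholomorphic projections of $|f_*|^{q-2}\overline{f_*}$ and $|f_*|^{p-2}\overline{f_*}$, and there is no soft argument known that forces its solutions to be zero-free, which is exactly why Kulikov's proof avoids the variational framework altogether. As written, your proposal assumes the theorem at its crux; a self-contained proof would have to reproduce his superlevel-set argument rather than cite it.
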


\begin{lemma}
	For $f$ analytic in the unit disk $\mathbb{D}$ and $p\geq 2,$ the function
	$$ \Phi_p(r)=\frac{1}{2\pi}\int_{0}^{2\pi}|f(\sqrt{r}e^{\imath \theta})|^p d\theta$$
	is convex on $r.$ The same function is convex on $r$ for $0<p<2$ for zero-free function $f.$
\end{lemma}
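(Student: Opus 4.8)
The plan is to deduce the convexity of $\Phi_p$ in $r$ from the monotonicity of the circular means of the auxiliary function $g:=|f|^{p-2}|f'|^2$, using the classical link between circular means and the Laplacian. Since $f$ is analytic, a direct computation on the open set $\{f\neq 0\}$ gives
\[ \Delta\bigl(|f(z)|^p\bigr)=p^2\,|f(z)|^{p-2}\,|f'(z)|^2 . \]
Under the hypothesis $p\geq 2$ — and, for $0<p<2$, under the assumption that $f$ is zero-free — this identity persists across the zeros of $f$ in the distributional sense with no extra (atomic) mass: near a zero of order $m$ one has $|f|^p\asymp|z-z_0|^{pm}$ with $pm\geq 2$, so the Laplacian is locally integrable. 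Hence $u:=|f|^p$ is continuous and subharmonic with $\Delta u=p^2 g$.

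Next I would invoke the standard formula
\[ \frac{\diff}{\diff\rho}\Bigl(\tfrac{1}{2\pi}\int_0^{2\pi}|f(\rho e^{\imath\theta})|^p\,\diff\theta\Bigr)=\frac{1}{2\pi\rho}\int_{|z|<\rho}\Delta u\,\diff A(z). \]
Writing $\mu(\rho)$ for the left-hand integral, so that $\Phi_p(r)=\mu(\sqrt r)$, substituting $\rho=\sqrt r$, passing to polar coordinates in the area integral, changing variables $s^2=t$, and differentiating once more in $r$, I obtain
\[ \Phi_p''(r)=\frac{p^2}{8\pi r^2}\int_0^r\bigl(K(\sqrt r)-K(\sqrt t)\bigr)\,\diff t,\qquad K(s):=\int_0^{2\pi}|f(se^{\imath\theta})|^{p-2}|f'(se^{\imath\theta})|^2\,\diff\theta. \]
Thus it remains only to show that $s\mapsto K(s)$ is nondecreasing on $[0,1)$.

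But $\tfrac{1}{2\pi}K(s)$ is exactly the circular mean of $g=|f|^{p-2}|f'|^2=\exp\bigl((p-2)\log|f|+2\log|f'|\bigr)$. Now $\log|f'|$ is subharmonic, and $(p-2)\log|f|$ is subharmonic whenever $p\geq 2$ (it is a nonnegative multiple of the subharmonic function $\log|f|$) or whenever $f$ is zero-free (in which case $\log|f|$ is harmonic, so this holds for every $p>0$); this is precisely the point at which the two cases of the lemma separate. In either situation $g$ is the exponential of a subharmonic function, hence subharmonic, so its circular means are nondecreasing. Therefore $K(\sqrt r)\geq K(\sqrt t)$ for $0<t\leq r$, whence $\Phi_p''(r)\geq 0$ and $\Phi_p$ is convex, with the behaviour at the endpoints $r=0,1$ settled by continuity and monotone convergence.

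The main obstacle is the regularity bookkeeping at the zeros of $f$ (and of $f'$): one must check that neither $|f|^p$ nor $g$ contributes an atom to its Laplacian — which is exactly what $p\geq 2$, or zero-freeness, guarantees — and one must justify the two differentiations under the integral sign; a clean way to organize this is to run the computation first on $\{f\neq 0\}$ and then invoke removability of the locally finite zero set for continuous subharmonic functions. Finally, in the zero-free case one may bypass the computation entirely: writing $f^{p/2}=\sum_n b_n z^n$ as a well-defined analytic function, one has $|f|^p=|f^{p/2}|^2$ and hence $\Phi_p(r)=\sum_n|b_n|^2 r^n$, which is visibly convex in $r\geq 0$.
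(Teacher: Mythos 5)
Your proof is correct and follows essentially the same route as the paper: both reduce the convexity of $\Phi_p$, via the Hardy--Stein identity $\Delta|f|^p=p^2|f|^{p-2}|f'|^2$ and two differentiations, to the monotonicity of the circular means of $g=|f|^{p-2}|f'|^2$, i.e.\ to the subharmonicity of $g$. The only divergence is the last step: you get subharmonicity by writing $g=\exp\bigl((p-2)\log|f|+2\log|f'|\bigr)$ as the exponential of a subharmonic function (with the zero-free power-series shortcut as a bonus), whereas the paper computes $\Delta g=|f|^{p-4}\bigl|(p-2)f'(z)^2-2f(z)f''(z)\bigr|^2\ge 0$ directly; both are valid.
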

\begin{proof}
We start from the formula
$$\Phi_p(r)=|f(0)|^p+\frac{p^2}{2\pi}\int_{0}^{\sqrt{r}}\frac{1}{\rho}\bigg(\int_{|z|<\rho}|f(z)|^{p-2}|f'(z)|^2 dA(z)\bigg)d\rho,$$
from whom we easily find
$$\Phi'_p(r)=\frac{p^2}{4\pi r}\int_{|z|<\sqrt{r}}|f(z)|^{p-2}|f'(z)|^2 dA(z) $$
$$=\frac{p^2}{4\pi r}\int_{0}^{\sqrt{r}}\rho\bigg(\int_{0}^{2\pi}|f(\rho e^{\imath\theta})|^{p-2}|f'(\rho e^{\imath\theta})|^2d\theta\bigg)d\rho$$
and
$$\Phi''_p(r)=\frac{p^2}{4\pi r^2}\bigg(\frac{r}{2}\int_{0}^{2\pi}|f(\sqrt{r} e^{\imath\theta})|^{p-2}|f'(\sqrt{r} e^{\imath\theta})|^2d\theta-$$
$$\int_{0}^{\sqrt{r}}\rho\big(\int_{0}^{2\pi}|f(\rho e^{\imath\theta})|^{p-2}|f'(\rho e^{\imath\theta})|^2d\theta\big)d\rho\bigg).$$
If we denote $F(\rho)=\int_{0}^{2\pi}|f(\rho e^{\imath\theta})|^{p-2}|f'(\rho e^{\imath\theta})|^2d\theta,$ then from its subharmonicity we would have
$$\int_{0}^{\sqrt{r}}\rho F(\rho)d\rho\leq\int_{0}^{\sqrt{r}}\rho F(\sqrt{r})d\rho=\frac{r}{2}F(\sqrt{r})$$
and, by the last formula, we can conclude $\Phi_p''(r)\geq 0.$
However, we easily find
$$\Delta(|f(z)|^{p-2}|f'(z)|^2)$$
$$=|f(z)|^{p-4}\bigg((p-2)^2|f'(z)|^4+4|f(z)|^2|f''(z)|^2-4(p-2)\Re\big(f(z)f''(z)\overline{f'(z)}^2\big)\bigg)$$
$$=|f(z)|^{p-4}|(p-2)f'(z)^2-2f(z)f''(z)|^2\geq 0$$
and the conclusion follows.
\end{proof}
Now we prove our Theorem 1. 
\begin{proof}
First, we estimate the right-hand side from below using Kulikov's inequality (2): 
$$\bigg(\frac{\beta-1}{\pi}\int_{\mathbb{D}}|f(z)|^q(1-|z|^2)^{\beta-2}dA(z)\bigg)^{\frac{1}{q}}\leq \bigg(\frac{\alpha-1}{\pi}\int_{\mathbb{D}}|f(z)|^p(1-|z|^2)^{\alpha-2}dA(z)\bigg)^{\frac{1}{p}}.$$
Our estimate will follow from the inequality: 
\begin{equation}
	\frac{\alpha-1}{\pi}\int_{\mathbb{D}}|f(rz)|^q(1-|z|^2)^{\alpha-2}dA(z)\leq \frac{\beta-1}{\pi}\int_{\mathbb{D}}|f(z)|^q(1-|z|^2)^{\beta-2}dA(z).
\end{equation}
Using polar coordinates, we get that the last inequality is equivalent to 
$$(\alpha-1)\int_{0}^{1}\rho(1-\rho^2)^{\alpha-2}\bigg(\int_{0}^{2\pi}|f(r\rho e^{\imath\theta})|^q d\theta\bigg)d\rho$$
$$\leq (\beta-1)\int_{0}^{1}\rho(1-\rho^2)^{\beta-2}\bigg(\int_{0}^{2\pi}|f(\rho e^{\imath\theta})|^q d\theta\bigg)d\rho.$$
After the change of variable $\rho=\sqrt{y}$ and introducing the function $\Phi(y)=\int_{0}^{2\pi}|f(\sqrt{y}e^{\imath \theta})|^q d\theta,$ we get:
$$(\alpha-1)\int_{0}^{1}(1-y)^{\alpha-2}\Phi(r^2y)dy\leq (\beta-1)\int_{0}^{1}(1-y)^{\beta-2}\Phi(y)dy.$$
Integrating by parts gives
\begin{equation}
r^2\int_{0}^{1}(1-y)^{\alpha-1}\Phi'(r^2y)dy\leq \int_{0}^{1}(1-y)^{\beta-1}\Phi'(y)dy.
\end{equation}
Integrating by parts once again, we get
$$\frac{r^2}{\alpha}\Phi'(0)+\frac{r^4}{\alpha}\int_{0}^{1}(1-y)^{\alpha}\Phi''(r^2y)dy\leq \frac{1}{\beta}\Phi'(0)+\frac{1}{\beta}\int_{0}^{1}(1-y)^{\beta}\Phi''(y)dy,$$
or, by using that $r^2=\frac{p}{q}=\frac{\alpha}{\beta}:$
$$r^2\int_{0}^{1}(1-y)^{\alpha}\Phi''(r^2y)dy\leq \int_{0}^{1}(1-y)^{\beta}\Phi''(y)dy.$$
Since 
 $$r^2\int_{0}^{1}(1-y)^{\alpha}\Phi''(r^2y)dy=\int_{0}^{r^2}(1-\frac{y}{r^2})^{\alpha}\Phi''(y)dy$$
and Lemma 1 gives the convexity of $\Phi(y),$ i.e. $\Phi''(y)\geq 0$ the result follows 
from the inequality 
$$\int_{0}^{r^2}(1-\frac{y}{r^2})^{\alpha}\Phi''(y)dy\leq  \int_{0}^{r^2}(1-y)^{\beta}\Phi''(y)dy,$$
which is implied by the inequality
\begin{equation}
(1-\frac{y}{r^2})^{\alpha}\leq (1-y)^{\beta}\quad \text{for}\quad 0\leq y\leq \frac{\alpha}{\beta}=r^2.
\end{equation}
But, $g(y)=(1-y)^{\frac{\beta}{\alpha}}$ is convex and therefore not smaller than $g(0)+g'(0)y=1-\frac{y}{r^2}.$ This finishes the proof for the case $q\geq 2.$

For an arbitrary holomorphic function $f$, the function $\Phi(y)$ need not be convex, but it is still monotone increasing (since $|f(z)|^q$ is subharmonic for holomorphic $f$ and $q>0$)
and we can prove (4) with $r^2=\frac{\alpha-1}{\beta-1}$ using the inequality (5) with $\alpha-1$ and $\beta-1$ instead of $\alpha$ and $\beta,$ respectively. The second estimate for $r$ in the case $q<2$ follows from the first part of Theorem with $q=2$ and Jensen's inequality $\|f(\sqrt{\frac{p}{2}}z)\|_{A^q_{\alpha}}\leq \|f(\sqrt{\frac{p}{2}}z)\|_{A^2_{\alpha}}.$

For the zero-free function $f$ and $0<p<q$ with $q<2$ we can use the first part of this Theorem for the function $f^{\frac{1}{n}}$ and $np$ and $nq$ instead of $p$ and $q,$ respectively, where $n$ is such that $nq\geq 2.$   
\end{proof}

\textbf{Remark}. The careful reader can easily deduce that the statement of our theorem also holds for functions with the zeros whose multiplicities are all big enough depending on $p.$ For example, if a function $f$ has all zeros in $\mathbb{D}$ of multiplicities $2,$ than its square root function is well defined and the desired estimate (1) holds for $q\geq 1.$ Also, it can be easily verified that the estimate (3) is not valid for $f(z)=z$ and $q<2.$ However, using log-Sobolev inequality from Corollary 1 one can see from some estimates of Gamma function and its derivatives that (1) holds for $f(z)=z$ for all $\alpha>1$ and $0<p<q<+\infty.$ 

    After submitting this paper, we are informed that Adrian Llineares and Alejandro Mas have proved the similar result using different techniques.

\textbf{Acknowledgement}. The author is grateful to David Kalaj and Dragan Vukoti\'c for their encouragement and inspiring conversations on this topic. The author is supported by the MPNTR grant 174017.

\end{document}